\documentclass[a4paper]{amsart}
\usepackage{graphicx}
\usepackage{amsmath}
\usepackage{amssymb}
\usepackage{oldgerm}
\usepackage{mathdots}
\usepackage{stmaryrd}
\usepackage{bm}
\usepackage[all]{xy}
\usepackage{color}
\usepackage{enumerate}
\usepackage{fourier}
\usepackage{hyperref}
\usepackage{mathrsfs}
\usepackage{tikz}
\usetikzlibrary{arrows,%
  decorations.markings,%
  matrix,%
  shapes}

\newcommand{\Hom}{\operatorname{Hom}\nolimits}
\newcommand{\End}{\operatorname{End}\nolimits}

\renewcommand{\mod}{\operatorname{mod}\nolimits}
\newcommand{\stmod}{\operatorname{\underline{mod}}\nolimits}

\newcommand{\Ann}{\operatorname{Ann}\nolimits}

\newcommand{\Ext}{\operatorname{Ext}\nolimits}

\newcommand{\Maxspec}{\operatorname{MaxSpec}\nolimits}

\newcommand{\HH}{\operatorname{HH}\nolimits}

\newcommand{\rad}{\operatorname{rad}\nolimits}

\newcommand{\m}{\mathfrak{m}}

\newcommand{\az}{\mathfrak{a}}
\newcommand{\ra}{\operatorname{\mathfrak{r}}\nolimits}

\newcommand{\op}{\operatorname{op}\nolimits}

\newcommand{\V}{\operatorname{V}\nolimits}

\newcommand{\e}{\operatorname{e}\nolimits}

\newcommand{\Char}{\operatorname{char}\nolimits}

\newcommand{\derived}{\operatorname{\bf{D^{b}}}\nolimits}

\newcommand{\C}{\mathscr{C}}

\newcommand{\D}{\mathscr{D}}
\newcommand{\A}{\mathscr{A}}

\newcommand{\calX}{\mathscr{X}}

\newtheorem{theorem}{Theorem}[section]

\newtheorem{corollary}[theorem]{Corollary}

\theoremstyle{definition}

\theoremstyle{definition}

\theoremstyle{definition}
\newtheorem{fact}[theorem]{Fact}

\theoremstyle{remark}

\theoremstyle{definition}

\theoremstyle{definition}

\theoremstyle{definition}
\newtheorem*{setup}{Setup}

\begin{document}

\title{On support varieties and tensor products for finite dimensional algebras}

\author{Petter Andreas Bergh, Mads Hustad Sand{\o}y, {\O}yvind Solberg}

\address{Institutt for matematiske fag \\ NTNU \\ N-7491 Trondheim \\ Norway}
\email{petter.bergh@ntnu.no}
\email{mads.sandoy@ntnu.no}
\email{oyvind.solberg@ntnu.no}

\subjclass[2010]{16D20, 16E40, 16S80, 16T05, 18D10, 18E30, 81R50}

\keywords{Support varieties, tensor products, quantum complete intersections}


\begin{abstract}
It has been asked whether there is a version of the tensor product property for support varieties over finite dimensional algebras defined in terms of Hochschild cohomology. We show that in general no such version can exist. In particular, we show that for certain quantum complete intersections, there are modules and bimodules for which the variety of the tensor product is not even contained in the variety of the one-sided module.
\end{abstract}

\maketitle

\section{Introduction}\label{sec:intro}

In \cite{Carlson1, Carlson2}, Carlson introduced cohomological support varieties for modules over group algebras of finite groups, using the maximal ideal spectrum of the group cohomology ring. These varieties behave well with respect to the typical operations such as directs sums and syzygies. Moreover, they encode important homological information. For example, the dimension of the support variety of a module equals the complexity of the module. In particular, the variety of a module is trivial if and only if the module is projective.

Shortly after these cohomological support varieties were introduced, it was shown in \cite{AvruninScott} that the variety of a tensor product of modules equals the intersection of the varieties of the modules. This property is commonly referred to as the \emph{tensor product property}. As shown in \cite{FriedlanderPevtsova}, it holds also for modules over finite dimensional cocommutative Hopf algebras; for such algebras, there is a theory of support varieties generalizing that for groups. In fact, one can define support varieties over any finite dimensional Hopf algebra, cocommutative or not, using the Hopf algebra cohomology ring. However, it is not known if this cohomology ring is finitely generated in general. What \emph{is} known is that the tensor product property may or may not hold for non-cocommutative Hopf algebras having finitely generated cohomology rings. Namely, as shown in \cite{BensonWitherspoon, PevtsovaWitherspoon, PlavnikWitherspoon}, there are examples of such algebras where the tensor product property holds, and examples where it does not.

Why do we care about the tensor product property? There are several reasons. Not only does it look good; it indicates that the homological behavior of a tensor product is closely related to each of the factors. When the property does not hold, some peculiar things can happen; examples in \cite{BensonWitherspoon} show that the tensor product of two modules in one order can be projective, but non-projective in the other order. Another reason why the tensor product property is of interest is that in many cases, it is connected with the classification of thick subcategories. It is an ingredient in Balmer's classification of thick tensor ideals of tensor triangulated categories (cf.\ \cite{Balmer}), and a necessary consequence of Benson, Iyengar and Krause's stratification approach in \cite{BIK1, BIK2}, as shown in \cite[Theorem 7.3]{BIK1}. In general, one is often in a situation where some triangulated tensor category (where the tensor product is not necessarily symmetric) acts on a triangulated category, and where the latter comes with a theory of support varieties relative to some cohomology ring; this is studied in detail in \cite{BuanKrauseSnashallSolberg}. If the appropriate tensor product property holds, then it is sometimes the case that the thick subcategories are actually tensor ideals.

In \cite{EHSST, SnashallSolberg, Solberg}, a theory of support varieties for arbitrary finite dimensional algebras was developed, using Hochschild cohomology rings. For such an algebra $A$, there is in general no natural tensor product between one-sided modules, as is the case for Hopf algebras. However, one can tensor any left $A$-module with a bimodule, and obtain a new left $A$-module. It has therefore been asked whether some version of the tensor product property holds in this setting. In other words, given a bimodule $B$ and a left $A$-module $M$, is there an equality
$$\V (B \otimes_A M) = \V(B) \cap \V(M)$$
of support varieties? This does not immediately make sense: how should we define the support variety of a bimodule? If we just use the same definition as for one-sided modules, then the support variety of any bimodule which is one-sided projective is trivial. In this case, the variety of the tensor product $A \otimes_A M$ would be $\V(M)$, whereas $\V(A) \cap \V(M)$ would always be trivial. However, as we explain at the end of Section \ref{sec:main}, there are actually several possible meaningful ways of defining a support variety theory for bimodules, using Hochschild cohomology. On the other hand, we show that the tensor product property can \emph{never} hold in general, regardless of which bimodule version of support variety theory we use. In fact, we show in Theorem \ref{thm:main} that when $A$ is a quantum complete intersection of a certain type, then there exists a left $A$-module $M$ and a bimodule $B$ for which
$$\V (B \otimes_{A} M) \nsubseteq \V_H(M)$$
One consequence of the failure of such an inclusion is that in the stable module category and the derived category of $A$-modules, there are thick subcategories that are not tensor ideals.

\section{Support varieties and tensor products}\label{sec:main}

Let us first recall the basics on the theory of support varieties for finite dimensional algebras, using Hochschild cohomology. We only give a very brief overview; for details, we refer the reader to \cite{EHSST, SnashallSolberg, Solberg}. 

Let $k$ be a field and $A$ a finite dimensional $k$-algebra with radical $\ra$. All modules considered will be finitely generated left modules, and we denote the category of such $A$-modules by $\mod A$. A bimodule over $A$ is the same thing as a left module over the enveloping algebra $A^{\e} = A \otimes_k A^{\op}$, and the Hochschild cohomology ring of $A$ is the graded ring 
$$\HH^*(A) = \bigoplus_{n=0}^{\infty} \Ext_{A^{\e}}^n (A,A)$$
with the Yoneda product. This ring is graded-commutative, and so its even part $\HH^{2*}(A)$ is commutative in the ordinary sense. Now let $M$ and $N$ be $A$-modules, and consider the graded vector space
$$\Ext_A^*(M,N) = \bigoplus_{n=0}^{\infty} \Ext_A^n(M,N)$$
The Yoneda product makes this into a graded left module over $\Ext_A^*(N,N)$, and a graded right module over $\Ext_A^*(M,M)$. Since for every $L \in \mod A$ the tensor product $- \otimes_A L$ induces a homomorphism
$$\varphi_L \colon \HH^*(A) \to \Ext_A^*(L,L)$$
of graded rings, we see that $\Ext_A^*(M,N)$ becomes a module over $\HH^*(A)$ in two ways: via the ring homomorphisms $\varphi_N$ and $\varphi_M$. However, the scalar multiplication via these two ring homomorphisms coincide up to a sign. 

Now suppose that $H$ is a graded subalgebra of $\HH^{2*}(A)$. Then for every pair $(M,N)$ of $A$-modules, we can define the support variety $\V_H(M,N)$ using the maximal ideal spectrum of $H$:
$$\V_H (M,N) = \left \{ \m \in \Maxspec H \mid \Ann_H \left ( \Ext^*_{A} (M,N) \right ) \subseteq \m \right \}$$
There are equalities
$$\V_H (M,M) = \V_H (M,A / \ra ) = \V_H (A/ \ra, M)$$
and we define this to be the support variety $\V_H(M)$ of the single module $M$. These support varieties share many of the properties enjoyed by the cohomological support varieties for modules over group rings, in particular when $H$ is noetherian and $\Ext_A^*(M,N)$ is a finitely generated $H$-module for all $M,N \in \mod A$. If this is the case, we say that the algebra $A$ satisfies \textbf{Fg} with respect to $H$. Note that by \cite[Proposition 5.7]{Solberg}, the (even part of the) Hochschild cohomology ring is universal with this property, in the following sense: the algebra $A$ satisfies \textbf{Fg} with respect to some $H \subseteq \HH^{*}(A)$ if and only if $\HH^{*}(A)$ is noetherian and $\Ext_A^*(A/ \ra, A/ \ra)$ is a finitely generated $\HH^{*}(A)$-module.

The finite dimensional algebras we shall study are of a very special form, namely quantum complete intersections. These are quantum commutative analogues of truncated polynomial rings. Let us therefore fix some notation that we shall use throughout.

\begin{setup}
(1) Fix an algebraically closed field $k$, together with two integers $c \ge 2$ and $a \ge 2$. 

(2) Define an integer $\bar{a}$ by 
$$\bar{a} = \left \{ 
\begin{array}{ll}
a & \text{if } \Char k =0 \\
a/ \gcd (a, \Char k ) & \text{if } \Char k >0
\end{array}
\right.$$
and fix a primitive $\bar{a}$th root of unity $q \in k$.

(3) Denote by $A^c_q$ the quantum complete intersection
$$k \langle x_1, \dots, x_c \rangle / \left ( x_1^a, \dots, x_c^a, \{ x_ix_j -qx_jx_i \}_{i<j} \right )$$
\end{setup}

This is a local selfinjective algebra of dimension $a^c$, and by \cite[Theorem 5.5]{BerghOppermann} it satisfies \textbf{Fg} with respect to $\HH^{2*}(A^c_q)$. In \cite{BensonErdmannHolloway}, it was shown that one can actually define rank varieties over this algebra, and that these varieties behave very much like the rank varieties for group algebras. It was then shown in \cite{BerghErdmann} that these rank varieties are isomorphic to the support varieties one obtains by using a suitable polynomial subalgebra of the Hochschild cohomology ring. We now point out some facts about this algebra and its support varieties.

\begin{fact}\label{fact:extalgebra}
(1) By \cite[Theorem 5.3]{BerghOppermann}, the $\Ext$-algebra $\Ext_{A^c_q}^*(k,k)$ of the simple module $k$ admits a presentation
$$k \langle z_1, \dots, z_c, y_1, \dots, y_c \rangle / \az$$
where $\az$ is the ideal generated by the relations
$$\left (
\begin{array}{ll}
z_iz_j - z_jz_i & \text{for all } i,j \\
z_iy_j - y_jz_i & \text{for all } i,j \\
y_iy_j + qy_jy_i & \text{for all } i > j \\
y_i^2 & \text{for all } i \text{ if } a > 2 \\
y_i^2 - z_i & \text{for all } i \text{ if } a = 2
\end{array}
\right )$$
Here, the homological degree of each $y_i$ is one, whereas that of each $z_i$ is two. In particular, the $z_i$ generate a polynomial subalgebra $k[z_1, \dots, z_c]$ over which $\Ext_{A^c_q}^*(k,k)$ is finitely generated as a module.

(2) As explained in \cite[Section 2]{BerghErdmann}, it follows from \cite[Corollary 3.5]{Oppermann} that the image of the ring homomorphism
$$\varphi_k \colon \HH^{2*}(A^c_q) \to \Ext_{A^c_q}^*(k,k)$$
is the whole polynomial subalgebra $k[z_1, \dots, z_c]$. Consequently, there exists a polynomial subalgebra $k[ \eta_1, \dots, \eta_c]$ of $\HH^{2*}(A^c_q)$ with the following properties: each $\eta_i$ is a homogeneous element in $\HH^{2*}(A^c_q)$ of degree two with $\varphi_k( \eta_i ) = z_i$, and $A^c_q$ satisfies \textbf{Fg} with respect to $k[ \eta_1, \dots, \eta_c]$.
\end{fact}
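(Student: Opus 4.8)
The plan is to obtain part (1) directly from \cite[Theorem 5.3]{BerghOppermann}, where the presentation of $\Ext_{A^c_q}^*(k,k)$ is computed from an explicit minimal projective resolution of the simple $A^c_q$-module; nothing needs to be added beyond recording the homological degrees of the generators ($y_i$ in degree one, $z_i$ in degree two) and observing, directly from the listed relations, that the $z_i$ are central and algebraically independent and that $\Ext_{A^c_q}^*(k,k)$ is a finitely generated $k[z_1,\dots,z_c]$-module. For this last point one uses the (anti)commutation relations to rewrite any monomial with the $y_i$ collected on the right, and then the relations $y_i^2 = 0$ (if $a>2$) or $y_i^2 = z_i$ (if $a=2$) to reduce to the finitely many square-free products $\prod_{i \in S} y_i$, $S \subseteq \{1,\dots,c\}$, as module generators over $k[z_1,\dots,z_c]$.

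For part (2) I would take the equality $\Im \varphi_k = k[z_1,\dots,z_c]$ as the one substantive external input, citing \cite[Corollary 3.5]{Oppermann} as reproduced in \cite[Section 2]{BerghErdmann}; a priori one only knows that $\Im \varphi_k$ is a central graded subalgebra of $\Ext_{A^c_q}^*(k,k)$, and this identification is where the real content sits. Granting it, for each $i$ pick $\eta_i \in \HH^{2}(A^c_q)$ with $\varphi_k(\eta_i) = z_i$: such an $\eta_i$ exists because $z_i$ has homological degree two, $z_i$ lies in $\Im \varphi_k$, and $\varphi_k$ is a homomorphism of graded rings, so the degree-two homogeneous component of any preimage of $z_i$ already maps onto $z_i$. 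The $\eta_i$ commute since $\HH^{2*}(A^c_q)$ is commutative, and they are algebraically independent since their images $z_i$ are; hence $H := k[\eta_1,\dots,\eta_c]$ is a polynomial subalgebra of $\HH^{2*}(A^c_q)$, and $\varphi_k$ restricts to an isomorphism of $H$ onto $k[z_1,\dots,z_c]$.

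It then remains to verify that $A^c_q$ satisfies \textbf{Fg} with respect to $H$. The ring $H$ is noetherian, being a polynomial ring, and by the standard reduction to the simple module in this theory (cf.\ \cite{EHSST, SnashallSolberg, Solberg}) it suffices to check that $\Ext_{A^c_q}^*(A^c_q / \ra, A^c_q / \ra) = \Ext_{A^c_q}^*(k,k)$ is finitely generated as an $H$-module. But the $H$-module structure here is the one induced through $\varphi_k$, hence factors through $\varphi_k|_H \colon H \twoheadrightarrow k[z_1,\dots,z_c]$; so any finite $k[z_1,\dots,z_c]$-generating set, which exists by part (1), is also an $H$-generating set. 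I expect the main obstacle to be exactly the external identification of $\Im \varphi_k$ in \cite[Corollary 3.5]{Oppermann}; beyond that, the only point requiring care is that \textbf{Fg} is a condition on all pairs $(M,N)$ of modules, which is precisely why one first passes to the single pair $(k,k)$.
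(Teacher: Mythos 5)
Your proposal is correct and follows the route the paper intends: the Fact is recorded as a citation package, with the presentation taken from \cite[Theorem 5.3]{BerghOppermann}, the identification of $\Im \varphi_k$ from \cite[Corollary 3.5]{Oppermann} via \cite[Section 2]{BerghErdmann}, and the existence of the lifts $\eta_i$ plus the \textbf{Fg} condition then following by the standard reduction to the simple module and the observation that the $H$-action on $\Ext^*_{A^c_q}(k,k)$ factors through $\varphi_k|_H$. Your verification of the details (collecting the $y_i$, the square-free monomial basis over $k[z_1,\dots,z_c]$, lifting $z_i$ to a degree-two cohomology class, algebraic independence of the $\eta_i$ transported along $\varphi_k$) is accurate and fills in exactly what the paper leaves implicit.
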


We now prove our main result. It shows that there exists an $A^c_q$-module $M$ and a bimodule $B$ for which the support variety of the tensor product $B \otimes_{A^c_q} M$ is not contained in the support variety of $M$.

\begin{theorem}\label{thm:main}
Let $k[ \eta_1, \dots, \eta_c ]$ be a polynomial subalgebra of $\HH^{2*}( A^c_q )$ as in \emph{Fact \ref{fact:extalgebra}}. Then for every graded subalgebra $H$ of $\HH^*( A^c_q )$ with 
$$k[ \eta_1, \dots, \eta_c ] \subseteq H \subseteq \HH^{2*}( A^c_q )$$
the following hold:

(1) the algebra $H$ is noetherian, and $A^c_q$ satisfies \textbf{Fg} with respect to $H$;

(2) there exists an $A^c_q$-module $M$ and a bimodule $B$ with $\V_H(B \otimes_{A^c_q} M) \nsubseteq \V_H(M)$.
\end{theorem}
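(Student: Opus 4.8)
The strategy is to reduce everything to the case $c=2$: a quantum complete intersection $A^c_q$ with $c\ge 2$ contains $A^2_q$ as a subalgebra and surjects onto it, and support varieties behave well under such restriction/induction, so it suffices to produce the bad pair $(M,B)$ over $A^2_q$ and then transport it (taking the remaining $x_i$ to act as zero). For part (1), the chain $k[\eta_1,\dots,\eta_c]\subseteq H\subseteq\HH^{2*}(A^c_q)$ makes $\HH^{2*}(A^c_q)$ a module-finite extension of a noetherian ring (this is exactly \textbf{Fg} with respect to $k[\eta_1,\dots,\eta_c]$, recorded in Fact~\ref{fact:extalgebra}(2)), hence $\HH^{2*}(A^c_q)$ is noetherian; then $H$, being an intermediate subalgebra, is itself module-finite over $k[\eta_1,\dots,\eta_c]$ by the Artin--Tate lemma, so $H$ is noetherian, and finite generation of $\Ext^*_{A^c_q}(A/\ra,A/\ra)$ over $k[\eta_1,\dots,\eta_c]$ forces finite generation over $H$. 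So \textbf{Fg} with respect to $H$ holds.

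For part (2), I would work over $A=A^2_q$ and exploit the rank-variety description from \cite{BensonErdmannHolloway,BerghErdmann}: the relevant support varieties live in $\V_H=\mathbb{A}^2$ (via the $\eta_i\leftrightarrow z_i$ identification), and for a suitable module the variety is cut out by an explicit homogeneous condition. The key is to choose the bimodule $B$ so that tensoring with it performs a nontrivial ``change of parameters'' on the cohomology of $M$ — for instance $B$ realizing an automorphism-twisted version of $A$, or more simply $B$ a cyclic bimodule of the form $A/(x_1-\lambda x_2)$-type whose one-sided structure is a periodic module with a prescribed one-dimensional variety, so that $B\otimes_A M$ has a variety no longer sitting inside $\V_H(M)$. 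Concretely, I would pick $M$ with $\V_H(M)$ a proper line $\ell\subseteq\mathbb{A}^2$ (a module of complexity one, e.g.\ $A/(x_1)A$ up to syzygy), and choose $B$ so that $B\otimes_A M$ has variety equal to a different line $\ell'\ne\ell$, or all of $\mathbb{A}^2$; then $\V_H(B\otimes_A M)\nsubseteq\V_H(M)$ by construction. The verification amounts to (i) computing the one-sided $A$-module structure of $B$ and of $B\otimes_A M$, (ii) computing their rank varieties via the criterion of \cite{BensonErdmannHolloway}, and (iii) translating to $\V_H$ using \cite{BerghErdmann} together with Fact~\ref{fact:extalgebra}(2) to ensure the answer is independent of which intermediate $H$ is chosen (this last point works because $\V_H(M)$ is the image of $\V_{k[\eta_1,\dots,\eta_c]}(M)$ under the finite surjection $\Maxspec H\to\Maxspec k[\eta_1,\dots,\eta_c]$, so non-containment downstairs lifts to non-containment upstairs).

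**Main obstacle.** The delicate part is the explicit construction of the bimodule $B$ together with the computation of the one-sided module $B\otimes_A M$ and its support variety: one needs $B$ to be genuinely a bimodule (a left $A^{\e}$-module), yet act on $M$ so as to shift the cohomological ``rank conditions'' off of the line $\ell$. I expect the cleanest choice is to take $B$ supported on an automorphism of $A^2_q$ (such algebras carry many $k$-algebra automorphisms permuting or rescaling the $x_i$) — i.e.\ $B={}_\sigma A_1$ for a suitable $\sigma$ — so that $B\otimes_A M\cong {}^\sigma\!M$ is $M$ with the $A$-action twisted by $\sigma$, whose rank variety is the image of $\V(M)$ under the induced linear map on $\mathbb{A}^2$; choosing $\sigma$ so that this linear map moves $\ell$ then finishes the argument. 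Checking that the twisted module indeed has the twisted variety, and that $\sigma$ can be chosen with $\varphi_k\circ\sigma^*$ acting nontrivially on the $z_i$'s, is where the real work lies; everything else (the \textbf{Fg} bookkeeping in part (1), the passage between $H$ and $k[\eta_1,\dots,\eta_c]$) is formal.
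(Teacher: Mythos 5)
Your overall strategy matches the paper's proof almost exactly: for part~(1) the argument that $H$ is a submodule of the module-finite extension $\HH^*(A^c_q)$ of the noetherian ring $k[\eta_1,\dots,\eta_c]$, hence itself module-finite and noetherian, is precisely what the paper does; and for part~(2) the paper's bimodule is indeed ${}_{\psi}A_1$ for a rescaling automorphism $\psi$ of $A^c_q$, exactly as in your "main obstacle" paragraph. The step you flag as the real work is resolved in the paper by a short direct calculation: take $M = Au_\lambda$ with $u_\lambda = \sum_i \lambda_i x_i$, whose variety is the line $\ell_{F(\lambda)}$ (with $F(\lambda)=(\lambda_1^a,\dots,\lambda_c^a)$) by \cite[Prop.~3.5]{BerghErdmann}, let $\psi_\mu(x_i)=\mu_i x_i$, and observe that $Au_{\mu^{-1}\lambda}\cong {}_{\psi_\mu}(Au_\lambda)$ simply by applying $\psi_\mu$; then $\V_H({}_{\psi_\mu}A_1\otimes_A Au_\lambda)=\ell_{F(\mu^{-1}\lambda)}$, which for $\lambda=(1,\dots,1)$ and generic $\mu$ (components not all having the same $a$th power) is a different line than $\ell_{F(\lambda)}$.

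Two small remarks on details that differ from the paper. First, your proposed reduction to $c=2$ is unnecessary and would cost you extra lemmas about how rank varieties transform under restriction and pushforward along $A^2_q \hookrightarrow A^c_q \twoheadrightarrow A^2_q$; the paper works directly over $A^c_q$ and there is no gain in reducing. Second, your justification for passing between $H$ and $k[\eta_1,\dots,\eta_c]$ — "non-containment downstairs lifts under the finite surjection $\Maxspec H\to\Maxspec k[\eta_1,\dots,\eta_c]$" — is stated in a way that does not obviously give what you need (the map goes the wrong direction for "image", and surjectivity alone does not preserve non-inclusions without more care). The paper argues instead that the image of $H$ in $\Ext^*_A(k,k)$ under $\varphi_k$ coincides with the image of $k[\eta_1,\dots,\eta_c]$, namely $k[z_1,\dots,z_c]$, so that by \cite[Thm.~3.2]{SnashallSolberg} and \cite[Prop.~3.6]{BerghSolberg} the variety $\V_H(X)$ is \emph{isomorphic}, compatibly with inclusions, to the variety of $\Ann\,\Ext^*_A(X,k)$ inside $\Maxspec k[z_1,\dots,z_c]$; this gives the reduction to $H=k[\eta_1,\dots,\eta_c]$ cleanly.
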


\begin{proof}
Let us simplify notation a bit and write $A$ for our algebra $A^c_q$. Since it satisfies \textbf{Fg} with respect to $k[ \eta_1, \dots, \eta_c ]$, it follows from \cite[Proposition 2.4]{EHSST} that the Hochschild cohomology ring $\HH^*( A )$ is finitely generated as a module over $k[ \eta_1, \dots, \eta_c ]$. Note that the assumption in \cite[Proposition 2.4]{EHSST} is that \textbf{Fg} holds with respect to a graded subalgebra of $\HH^*( A )$ whose degree zero part coincides with $\HH^0( A )$, which is the center of $A$. This is not the case for the polynomial subalgebra $k[ \eta_1, \dots, \eta_c ]$, since the center of $A$ is not of dimension one. However, this assumption is not needed in the result. 

Since $\HH^*( A )$ is finitely generated as a module over the noetherian ring $k[ \eta_1, \dots, \eta_c ]$, the same is true for $H$, since this is a $k[ \eta_1, \dots, \eta_c ]$-submodule of $\HH^*( A )$. Then $H$ is noetherian as a ring, since it contains $k[ \eta_1, \dots, \eta_c ]$ as a subring. Moreover, since $\Ext^*_{A} (k,k)$ is finitely generated over $k[ \eta_1, \dots, \eta_c ]$, it must also be finitely generated over the bigger algebra $H$. This proves (1).

To prove (2), we first show that we may without loss of generality assume that $H = k[ \eta_1, \dots, \eta_c ]$. To do this, consider the ring homomorphism 
$$\varphi_k \colon \HH^*( A ) \to \Ext^*_{A} (k,k)$$
By Fact \ref{fact:extalgebra}, the image of $\HH^{2*}( A )$  is the polynomial subalgebra $k[z_1, \dots, z_c]$ of $\Ext^*_{A} (k,k)$, and this is also the image of $k[ \eta_1, \dots, \eta_c ]$; after all, that is how we constructed $k[ \eta_1, \dots, \eta_c ]$ in the first place. Therefore, since $k[ \eta_1, \dots, \eta_c ] \subseteq H \subseteq \HH^{2*}( A )$, we see that the image of $k[ \eta_1, \dots, \eta_c ]$ is the same as that of $H$, namely $k[z_1, \dots, z_c]$. Now take any $A$-module $X$, and consider its support variety $\V_H(X)$, which by definition is the set
$$\left \{ \m \in \Maxspec H \mid \Ann_H \left ( \Ext^*_{A} (X,X) \right ) \subseteq \m \right \}$$
By \cite[Theorem 3.2]{SnashallSolberg}, there is an equality
$$\V_H (X) = \left \{ \m \in \Maxspec H \mid \Ann_H \left ( \Ext^*_{A} (X,k) \right ) \subseteq \m \right \}$$
and so by \cite[Proposition 3.6]{BerghSolberg} the variety $\V_H(X)$ is isomorphic to the set of maximal ideals of $k[z_1, \dots, z_c]$ containing the annihilator of $\Ext^*_{A} (X,k)$. Here we view $\Ext^*_{A} (X,k)$ as a left module over $\Ext^*_{A} (k,k)$, and in this way it becomes a module over the subalgebra $k[z_1, \dots, z_c]$. The isomorphism respects inclusions of varieties, and this proves the claim.

In light of the above, we now take $H = k[ \eta_1, \dots, \eta_c ]$. Since $k$ is algebraically closed, we may identify the maximal ideal spectrum of $H$ with the affine space $k^c$. For a point $\lambda = ( \lambda_1, \dots, \lambda_c)$ in $k^c$, we denote the corresponding maximal ideal $( \eta_1 - \lambda_1, \dots, \eta_c - \lambda_c)$ in $H$ by $\m_{\lambda}$, and when $\lambda$ is nonzero we denote the corresponding line 
$$\left \{ \left ( \gamma \lambda_1, \dots, \gamma \lambda_c \right ) \mid \gamma \in k \right \}$$
in $k^c$ by $\ell_{\lambda}$. Moreover, we denote the element $\sum_{i=1}^c \lambda_ix_i$ in $A$ by $u_{\lambda}$, and by $F( \lambda )$ the point $( \lambda_1^a, \dots, \lambda_c^a)$ in $k^c$. By \cite[Proposition 3.5]{BerghErdmann}, the support variety $\V_H( Au_{\lambda} )$ of the cyclic $A$-module $Au_{\lambda}$ equals $\ell_{F(\lambda)}$, that is, there is an equality
$$\V_H \left ( Au_{\lambda} \right ) = \left \{ \m_{\gamma F(\lambda)} \mid \gamma \in k \right \} = \left\{  \left ( \eta_1 - \gamma \lambda_1^a, \dots, \eta_c - \gamma \lambda_c^a \right ) \mid \gamma \in k \right \}$$
Note that $F( \lambda ) =0$ if and only if $\lambda = 0$.

Now take any point $\mu = ( \mu_1, \dots, \mu_c )$ in $k^c$ with $\mu_i \neq 0$ for all $i$, and consider the automorphism $\psi_{\mu} \colon A \to A$ given by $x_i \mapsto \mu_i x_i$. What happens to the cyclic $A$-module $Au_{\lambda}$ when we twist it by this automorphism? In general, for an $A$-module $X$ and an automorphism $\psi$ of $A$, the twisted module ${_{\psi}X}$ is the same as $X$ as a vector space, but for $w \in A$ and $x \in X$ the scalar multiplication is $w \cdot x = \psi (w) x$. Now denote the point $( \mu_1^{-1} \lambda_1, \dots, \mu_c^{-1} \lambda_c )$ in $k^c$ by $\mu^{-1} \lambda$, and consider the map
\begin{eqnarray*}
Au_{\mu^{-1} \lambda} & \to & {_{\psi_{\mu}}\left ( Au_{\lambda} \right )} \\
w u_{\mu^{-1} \lambda} & \mapsto & \psi_{\mu}(w) u_{\lambda}
\end{eqnarray*}
Note that since $u_{\mu^{-1} \lambda} = \psi_{\mu}^{-1} ( u_{\lambda})$, this map is obtained by simply applying $\psi_{\mu}$ to the elements in $Au_{\mu^{-1} \lambda}$. It is $k$-linear, and for every element $v \in A$ and $w u_{\mu^{-1} \lambda} \in Au_{\mu^{-1} \lambda}$ there are equalities
\begin{eqnarray*}
\psi_{\mu} \left ( v \cdot ( w u_{\mu^{-1} \lambda} ) \right ) & = & \psi_{\mu} \left ( vw u_{\mu^{-1} \lambda} \right ) \\
& = & \psi_{\mu} (u) \psi_{\mu} (w) u_{\lambda} \\
& = & u \cdot \left ( \psi_{\mu} (w) u_{\lambda} \right )
\end{eqnarray*}
Thus the map is an $A$-homomorphism. Similarly, the inverse automorphism $\psi_{\mu}^{-1}$ induces an $A$-homomorphism in the other direction, hence $Au_{\mu^{-1} \lambda}$ and ${_{\psi_{\mu}}\left ( Au_{\lambda} \right )}$ are isomorphic $A$-modules. Using \cite[Proposition 3.5]{BerghErdmann} again, we now see that $\V_H \left ( {_{\psi_{\mu}}\left ( Au_{\lambda} \right )} \right )$ equals the line $\ell _{F( \mu^{-1} \lambda )}$.

Twisting an $A$-module $X$ by an automorphism $\psi$ is the same as tensoring with the bimodule ${_{\psi}A_1}$, i.e.\ ${_{\psi}X} \simeq {_{\psi}A_1} \otimes_A X$. Therefore, with $\lambda$ and $\mu$ as above, the support variety $\V_H \left ( {_{\psi_{\mu}}A_1} \otimes_A Au_{\lambda} \right )$ is the line $\ell _{F( \mu^{-1} \lambda )}$. On the other hand, the support variety $\V_H( Au_{\lambda} )$ is the line $\ell_{F(\lambda)}$, which generically differs from $\ell _{F( \mu^{-1} \lambda )}$. For example, with $\lambda = (1, \dots, 1)$, any $\mu$ whose components are not all the same when raised to the $a$th power will do. Consequently, for this $\lambda$ and such a $\mu$, we see that $\V_H \left ( {_{\psi_{\mu}}A_1} \otimes_A Au_{\lambda} \right ) \nsubseteq \V_H( Au_{\lambda} )$.
\end{proof}

As a consequence of the theorem, there cannot exist a bimodule version of the tensor product property for support varieties over the algebra $A^c_q$.

\begin{corollary}\label{cor:notintersection}
Let $H,M$ and $B$ be as in \emph{Theorem \ref{thm:main}}, and suppose that $\V_H^b$ is a support variety theory on the category of $A^c_q$-bimodules, defined in terms of the maximal ideal spectrum of $H$. Then $\V_H(B \otimes_{A^c_q} M) \neq \V_H^b (B) \cap \V_H(M)$.
\end{corollary}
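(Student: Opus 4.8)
The plan is to observe that the corollary is an essentially formal consequence of part (2) of Theorem \ref{thm:main}, and that it uses no information whatsoever about how the hypothetical bimodule theory $\V_H^b$ is actually defined. The only feature of $\V_H^b$ that enters is the one built into the hypothesis: for a bimodule $B$, the set $\V_H^b(B)$ is a subset of $\Maxspec H$, the very same ambient space in which the one-sided support variety $\V_H(M)$ lives. Hence $\V_H^b(B) \cap \V_H(M)$ is literally a set-theoretic intersection of two subsets of $\Maxspec H$, and in particular
$$\V_H^b(B) \cap \V_H(M) \subseteq \V_H(M).$$

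First I would take $H$, $M$ and $B$ to be exactly the objects exhibited in the proof of Theorem \ref{thm:main}, namely $M = Au_{\lambda}$ and $B = {_{\psi_{\mu}}A_1}$ for suitable $\lambda$ and $\mu$, so that part (2) of that theorem gives $\V_H(B \otimes_{A^c_q} M) \nsubseteq \V_H(M)$. Then I would argue by contradiction: if the equality $\V_H(B \otimes_{A^c_q} M) = \V_H^b(B) \cap \V_H(M)$ held, then combining it with the containment of the previous paragraph would force $\V_H(B \otimes_{A^c_q} M) \subseteq \V_H(M)$, contradicting Theorem \ref{thm:main}(2). Therefore the equality cannot hold, which is the assertion of the corollary.

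There is no genuine obstacle here; the content of the statement is conceptual rather than technical, and the write-up should make explicit the point of this brevity: because the argument never refers to the internal construction of $\V_H^b$, it simultaneously rules out the tensor product property for \emph{every} conceivable bimodule support variety theory over $A^c_q$ valued in $\Maxspec H$ — whether $\V_H^b(B)$ is defined via $\Ext_{A^{\e}}^*(B,B)$ pulled back along some map $H \to \HH^*(A^{\e})$, via $\Ext_A^*$ of $B$ viewed as a one-sided module, or by any other recipe whatsoever. The failure is thus an intrinsic feature of $A^c_q$, not an artifact of a particular definitional choice.
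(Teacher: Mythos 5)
Your proposal is correct and is precisely the argument the paper has in mind: the corollary is stated as an immediate consequence of Theorem \ref{thm:main}, with no separate proof given, exactly because $\V_H^b(B)\cap\V_H(M)\subseteq\V_H(M)$ holds for purely set-theoretic reasons regardless of how $\V_H^b$ is defined, and this clashes with the non-inclusion $\V_H(B\otimes_{A^c_q}M)\nsubseteq\V_H(M)$ from the theorem. Your closing remark about the argument being agnostic to the internal construction of $\V_H^b$ matches the paper's own commentary that the failure holds ``no matter how one defines support varieties for bimodules.''
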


For a finite dimensional algebra $A$, there are actually several possible ways of defining support varieties for bimodules. Namely, take any commutative graded subalgebra $H$ of $\HH^*( A )$. For a bimodule $B$, we can view $\Ext_{A^{\e}}^*(B,A)$ as a left module over $\HH^*( A )$, and in this way it becomes an $H$-module. We can then define
$$\V_H^b (B) = \left \{ \m \in \Maxspec H \mid \Ann_H \left ( \Ext_{A^{\e}}^*(B,A) \right ) \subseteq \m \right \}$$
Similarly, we can use the fact that $\Ext_{A^{\e}}^*(A,B)$ is a right module over $\HH^*( A )$ and obtain another support variety. These types of one-sided support varieties were studied in \cite{BerghSolberg}, where it was shown that they satisfy many of the properties one expects for a meaningful theory of support.

Now suppose that we take a bimodule $B$ which is projective as a left
$A$-module. Then if we take any exact sequence $\eta$ of bimodules,
the sequence $\eta \otimes_A B$ remains exact. Thus we obtain a ring
homomorphism
\begin{eqnarray*}
\HH^*(A) & \to & \Ext_{A^{\e}}^*(B,B) \\
\eta & \mapsto & \eta \otimes_A B
\end{eqnarray*}
of graded rings, and we can define
$$\V_H^b (B) = \left \{ \m \in \Maxspec H \mid \Ann_H \left (
    \Ext_{A^{\e}}^*(B,B) \right ) \subseteq \m \right \}$$ 
Similarly, if $B$ is projective as a right $A$-module, we obtain a
version by tensoring with $B$ on the left. Consequently, for bimodules
which are projective as both left and right $A$-modules, there are
totally at least four ways of defining support varieties using $H$,
and there is in general no reason to expect them to be equivalent.

Suppose now that $A$ is a finite dimensional selfinjective algebra satisfying \textbf{Fg} with respect to some subalgebra $H$ of its Hochschild cohomology ring. We then ask: what are the consequences of having a tensor product formula for bimodules acting on left modules? In order to investigate this, assume that
$$\V_H (B \otimes_A M) = \V_H^b (B) \cap \V_H(M)$$
for all $B$ in a tensor closed subcategory $\calX$ of bimodules and
all left $A$-modules $M$, where $\V_H$ is the usual support variety theory on left modules and $\V_H^b$ is some support variety theory for bimodules in
$\calX$ (defined in terms of the same geometric space as $\V_H$, namely the maximal ideal spectrum of $H$). Then
\begin{align}
\V_H^b(B_1 \otimes_A  B_2) \cap \V_H(M) & = \V_H( (B_1 \otimes_A B_2 ) \otimes_A M)\notag\\
                                       & = \V_H(B_1 \otimes_A (B_2 \otimes_A M))\notag\\
                                       & = \V_H^b(B_1)\cap \V_H(B_2 \otimes_A M)\notag\\
                                       & = \V_H^b(B_1)\cap \V_H^b(B_2)\cap \V_H(M)\notag\\
                                       & = \V_H^b(B_2)\cap \V_H^b(B_1)\cap \V_H(M)\notag\\
                                       & = \V_H(B_2 \otimes_A (B_1 \otimes_A M))\notag\\
                                       & = \V_H((B_2 \otimes_A B_1)\otimes_A M)\notag\\
                                       & = \V_H^b(B_2\otimes_A B_1)\cap \V_H(M)\notag                   
\end{align}
for all $B_1$ and $B_2$ in $\calX$ og all left $A$-modules $M$.
Then we claim that the equality 
$$\V_H^b(B_1 \otimes_A B_2) = \V_H^b(B_2 \otimes_A B_1)$$
holds for all bimodules $B_1$ and $B_2$ in $\calX$.  To see this, choose
$M = A/\ra$, where $\ra$ is the radical of $A$. Then $\V_H(M)$ is the whole defining maximal ideal spectrum of $H$, so
that $\V_H^b(B_1 \otimes_A B_2) = \V_H^b(B_2 \otimes_A B_1)$.  Hence, one consequence
is that the bimodule support variety $\V_H^b$ must be independent of the order of the
terms in a tensor product of bimodules, and therefore forcing some
type of symmetry on the tensor products of bimodules in $\calX$.

Let $\eta\colon \Omega_{A^{\e}}^n(A)\to A$ represent a homogeneous
element in $H$, where $\Omega_{A^{\e}}^n(A)$ is the $n$th syzygy in a
minimal projective resolution of $A$ over $A^{\e}$.  Taking the
pushout along this homomorphism and the minimal projective resolution
of $A$ over $A^{\e}$ gives rise to a short exact sequence
\[0\to A \to M_\eta\to \Omega_{A^{\e}}^{n-1}(A)\to
  0\]
as defined in \cite{EHSST}. The bimodules $M_{\eta}$ for homogeneous
elements $\eta$ in $H$ have the following property
\[ \V_H(M_{\eta_1}\otimes_A\cdots \otimes_A
  M_{\eta_t}\otimes_A M) = \V_H(\langle
  \eta_1,\ldots,\eta_t\rangle) \cap \V_H(M).\] 
If there is a support variety $\V_H^b$ of bimodules such that 
\[ \V_H^b(M_{\eta_1}\otimes_A\cdots \otimes_A
  M_{\eta_t}) =  \V(\langle \eta_1,\ldots,\eta_t\rangle),\]
then $\V_H^b$ must in particular satisfy 
\[ \V_H^b(M_{\eta_1}\otimes_A M_{\eta_2}) = 
\V_H^b(M_{\eta_2}\otimes_A M_{\eta_1}).\]

For example, let $\V_H^b(B) = \V_H(B \otimes_A A/\ra)$ for a bimodule $B$.  Then
is follows that  
\[ \V_H^b(M_{\eta_1}\otimes_A\cdots \otimes_A M_{\eta_t}) =
  \V_H(\langle\eta_1,\ldots,\eta_t\rangle)\]
for all homogeneous elements $\eta_i$ in $H$, and $\V_H^b$ satisfies the
above symmetry condition.  Since 
\begin{align}
\Ext^*_A \left ( B\otimes_A A/\ra, A/\ra \right ) & \simeq \Ext^*_{A^{\e}} \left ( B,
                                          \Hom_A(A/\ra, A/\ra) \right )\notag\\
& \simeq \Ext^*_{A^{\e}}(B, A/\ra \otimes_k A/\ra)\notag\\
& \simeq \Ext^*_{A^{\e}}(B, A^{\e}/\rad A^{\e})\notag
\end{align}
as $H$-modules, and $A/\ra \otimes_k A/\ra \simeq
A^{\e}/\rad A^{\e}$ when $A/\ra$ is separable over the field
$k$, then applying similar arguments as in \cite{SnashallSolberg} we
obtain that  
\begin{align}
\V_H^b(B) & = \V(\Ann_{H}\Ext^*_{A^{\e}}(B,A^{\e}/\rad
        A^{\e}))\notag\\
      & = \V(\Ann_{H}\Ext^*_{A^{\e}}(B,B))\notag\\ 
      & =
\V(\Ann_{H}\Ext^*_{A^{\e}}(A^{\e}/\rad A^{\e},B)).\notag
\end{align}
In other words, adapting the notion from \cite{SnashallSolberg}, 
\[ \V_H^b(B) = \V_H^b(B,A^{\e}/\rad A^{\e}) = \V_H^b(B,B) = \V_H^b(A^{\e}/\rad
  A^{\e},B).\] Then it is natural to ask how we can/should choose
$\calX$.  If we are thinking in terms of subcategories of the stable category
of bimodules, can we choose $\calX$ to be the tensor closed
subcategory generated by the bimodules $M_\eta$ for all homogeneous
elements $\eta$ in $H$? If all $M_\eta$'s are in $\calX$, we do not
know how $M_{\eta_1}\otimes_A M_{\eta_2}$ and
$M_{\eta_2}\otimes_A M_{\eta_1}$ are related as bimodules in general.

Let us now return to our quantum complete intersection $A^c_q$. Corollary \ref{cor:notintersection}, which is a direct consequence of Theorem \ref{thm:main}, shows that the tensor product property for support varieties over this algebra cannot hold in general, now matter how one defines support varieties for bimodules. Another consequence of Theorem \ref{thm:main} is that not all the
thick subcategories of the derived category and the stable module
category of $A^c_q$ are tensor ideals. In order to explain this, let
us first briefly describe a general framework where one typically is
interested in such questions; for details, we refer to
\cite{BuanKrauseSnashallSolberg}. Let $\C$ be a triangulated tensor
category, that is, a triangulated category which is at the same time a
(possibly non-symmetric) tensor category, and where the two structures
are compatible. Furthermore, suppose that $\C$ acts on a triangulated
category $\D$. This means that there exists an additive bifunctor
\begin{eqnarray*}
\C \times \D & \to & \D \\
( C,D ) & \mapsto & C \ast D
\end{eqnarray*}
which is compatible in a natural way with the structures of both $\C$
and $\D$. Finally, suppose that $H$ is a commutative graded subalgebra
of the graded endomorphism ring $\End^*_{\C}(I)$ of the unit object
$I$ in $\C$, or, more generally, that there exists a ring homomorphism
$H \to \End^*_{\C}(I)$. Then for all objects $D_1,D_2 \in \D$, the
graded homomorphism group $\Hom^*_{\D}(D_1,D_2)$ becomes a left and a
right $H$-module, and left and right scalar multiplication coincide up
to a sign. One can then define the support variety $\V_H(D_1,D_2)$ as
usual, in terms of the variety of the annihilator ideal
$\Ann_H \left ( \Hom^*_{\D}(D_1,D_2) \right )$. For a single object
$D \in \D$, one defines the support variety by $\V_H(D) = \V_H(D,D)$.

Given any triangulated category, it is of great interest to classify
its thick subcategories. The first example of such a classification
was the celebrated result of Hopkins-Neeman, for the category of
perfect complexes over a commutative noetherian ring (cf.\
\cite{Hopkins, Neeman}). That particular classification result showed
for free that all the thick subcategories are actually thick tensor
ideals. Now given $\C$ and $\D$ as above, one may ask for a similar
classification of thick subcategories of $\D$, and whether these are
all tensor ideals. Here, the notion of tensor ideals in $\D$ refers to
the action of $\C$ on $\D$: a thick subcategory $\A \subseteq \D$ is a
tensor ideal if $C \ast A \in \A$ for all $C \in \C$ and $A \in \A$.

Suppose that $V$ is a closed homogeneous subvariety of $\Maxspec H$,
and define a full subcategory $\A_V$ of $\D$ by
$$\A_V = \left \{ D \in \D \mid \V_H(D) \subseteq V \right \}$$
This is a thick subcategory of $\D$, and there are several classes of
examples of triangulated categories where \emph{all} the thick
subcategories are of this form. For example, this is the case for the
category of perfect complexes over a commutative noetherian ring. The
crucial point now is that whenever
$\V_H ( C \ast D ) \subseteq \V_H(D)$ for all objects $C \in \C$ and
$D \in \D$, then $\A_V$ is automatically a thick tensor ideal for all
$V$. This indicates the importance of the inclusion property
$$\V_H ( C \ast D ) \subseteq \V_H(D)$$
for support varieties in the setting of a triangulated tensor category
acting on a triangulated category.

Now consider our quantum complete intersection $A = A^c_q$ again. This
is a selfinjective algebra, and so the stable module category
$\stmod A$ is triangulated. The enveloping algebra $A^{\e}$ is also
selfinjective, and its stable module category $\stmod A^{\e}$, that
is, the stable module category of $A$-bimodules, is a triangulated
tensor category. It acts on $\stmod A$ by tensor products over $A$,
and so we are in a setting where all of the above applies. However,
let $H,M$ and $B$ be as in Theorem \ref{thm:main}. Since
$\V_H(B \otimes_A M) \nsubseteq \V_H(M)$, not all thick subcategories
of $\stmod A$ can be tensor ideals. Namely, take $V = \V_H(M)$ and
define $\A_V$ as above. This is a thick subcategory of $\stmod A$, but
it is not a tensor ideal since $M \in \A_V$ but
$B \otimes_A M \notin \A_V$. Finally, note that the bimodule $B$ we
used in the proof of Theorem \ref{thm:main} is actually projective as
a left and as a right $A$-module. The bounded derived category of such
bimodules is also a triangulated tensor category, and it acts on the
bounded derived category $\derived ( \mod A)$ of $A$-modules. Thus
also in $\derived ( \mod A)$ there are thick subcategories that are
not tensor ideals.


\begin{thebibliography}{EHSST}
\bibitem[AvS]{AvruninScott}G.S.\ Avrunin, L.L.\ Scott, \emph{Quillen
    stratification for modules}, Invent.\ Math.\ 66 (1982), no.\ 2,
  277--286.
\bibitem[Bal]{Balmer}P.\ Balmer, \emph{The spectrum of prime ideals in
    tensor triangulated categories}, J.\ Reine Angew.\ Math. 588
  (2005), 149--168.
\bibitem[BEH]{BensonErdmannHolloway}D.\ Benson, K.\ Erdmann, M.\
  Holloway, \emph{Rank varieties for a class of finite-dimensional
    local algebras}, J.\ Pure Appl.\ Algebra 211 (2007), no.\ 2,
  497--510.
\bibitem[BIK1]{BIK1}D.\ Benson, S.B.\ Iyengar, H.\ Krause,
  \emph{Stratifying triangulated categories}, J.\ Topol.\ 4 (2011),
  no.\ 3, 641--666.
\bibitem[BIK2]{BIK2}D.\ Benson, S.B.\ Iyengar, H.\ Krause,
  \emph{Stratifying modular representations of finite groups}, Ann.\
  of Math.\ (2) 174 (2011), no.\ 3, 1643--1684.
\bibitem[BeW]{BensonWitherspoon}D.\ Benson, S.\ Witherspoon,
  \emph{Examples of support varieties for Hopf algebras with
    noncommutative tensor products}, Arch.\ Math.\ (Basel) 102 (2014),
  no.\ 6, 513--520.
\bibitem[BeE]{BerghErdmann}P.A.\ Bergh, K.\ Erdmann, \emph{The
    Avrunin-Scott theorem for quantum complete intersections}, J.\
  Algebra 322 (2009), no.\ 2, 479--488.
\bibitem[BeO]{BerghOppermann}P.A.\ Bergh, S.\ Oppermann,
  \emph{Cohomology of twisted tensor products}, J.\ Algebra 320
  (2008), no.\ 8, 3327--3338.
\bibitem[BeS]{BerghSolberg}P.A.\ Bergh, {\O}.\ Solberg, \emph{Relative
    support varieties}, Q.J.\ Math.\ 61 (2010), no.\ 2, 171--182.
\bibitem[BKSS]{BuanKrauseSnashallSolberg}A.B.\ Buan, H.\ Krause, N.\
  Snashall, {\O}.\ Solberg, \emph{Support varieties -- an axiomatic
    approach}, preprint, arXiv:1710.08685.
\bibitem[Ca1]{Carlson1}J.F.\ Carlson, \emph{The complexity and
    varieties of modules}, in \emph{Integral representations and
    applications (Oberwolfach, 1980)}, pp.\ 415--422, Lecture Notes in
  Math., 882, Springer, Berlin-New York, 1981.
\bibitem[Ca2]{Carlson2}J.F.\ Carlson, \emph{The varieties and the
    cohomology ring of a module}, J.\ Algebra 85 (1983), no.\ 1,
  104--143.
\bibitem[EHSST]{EHSST}K.\ Erdmann, M.\ Holloway, N.\ Snashall, {\O}.\
  Solberg, R.\ Taillefer, \emph{Support varieties for selfinjective
    algebras}, K-Theory 33 (2004), no.\ 1, 67--87.
\bibitem[FrP]{FriedlanderPevtsova}E.M.\ Friedlander, J.\ Pevtsova,
  \emph{$\pi$-supports for modules for finite group schemes}, Duke
  Math.\ J.\ 139 (2007), no.\ 2, 317--368.
\bibitem[Hop]{Hopkins}M.J.\ Hopkins, \emph{Global methods in homotopy
    theory}, in \emph{Homotopy theory (Durham, 1985)}, 73--96, London
  Math.\ Soc.\ Lecture Note Ser., 117, Cambridge Univ.\ Press,
  Cambridge, 1987.
\bibitem[Nee]{Neeman}A.\ Neeman, \emph{The chromatic tower for
    $D(R)$}, with an appendix by Marcel B{\"o}kstedt, Topology 31
  (1992), no.\ 3, 519--532.
\bibitem[Opp]{Oppermann}S.\ Oppermann, \emph{Hochschild cohomology and
    homology of quantum complete intersections}, Algebra Number Theory
  4 (2010), no.\ 7, 821--838.
\bibitem[PeW]{PevtsovaWitherspoon}J.\ Pevtsova, S.\ Witherspoon,
  \emph{Tensor ideals and varieties for modules of quantum elementary
    abelian groups}, Proc.\ Amer.\ Math.\ Soc.\ 143 (2015), no.\ 9,
  3727--3741.
\bibitem[PlW]{PlavnikWitherspoon}J.Y.\ Plavnik, S.\ Witherspoon,
  \emph{Tensor products and support varieties for some
    noncocommutative Hopf algebras}, Algebr.\ Represent.\ Theory 21
  (2018), no.\ 2, 259--276.
\bibitem[SnS]{SnashallSolberg}N.\ Snashall, {\O}.\ Solberg,
  \emph{Support varieties and Hochschild cohomology rings}, Proc.\
  London Math.\ Soc.\ (3) 88 (2004), no.\ 3, 705--732.
\bibitem[Sol]{Solberg}{\O}.\ Solberg, \emph{Support varieties for
    modules and complexes}, in \emph{Trends in representation theory
    of algebras and related topics}, 239--270, Contemp.\ Math., 406,
  Amer.\ Math.\ Soc., Providence, RI, 2006.
\end{thebibliography}
\end{document}